\documentclass[12pt]{amsart}
\usepackage{latexsym, amssymb, amsmath}

\newtheorem{theorem}{Theorem}[section]
\newtheorem{lemma}[theorem]{Lemma}

\newtheorem{remark}[theorem]{Remark}
\theoremstyle{definition}

\makeatletter
    
    \@addtoreset{equation}{section}
  \makeatother

\begin{document}

\title[Biharmonic Wintgen ideal submanifolds]
{Biharmonic Wintgen ideal submanifolds in Riemannian manifolds of constant sectional curvature}

\author{Shun Maeta}
\address{Department of Mathematics, Chiba University, 1-33, Yayoicho, Inage, Chiba, 263-8522, Japan.}
\email{shun.maeta@faculty.gs.chiba-u.jp~{\em or}~shun.maeta@gmail.com}
\subjclass[2020]{Primary 58E20; Secondary 53C42, 53B25, 53C43.}

\date{}

\dedicatory{}

\keywords{biharmonic submanifolds, Wintgen ideal submanifolds, Wintgen inequality, Chen's conjecture, generalized Chen's conjecture, Balmu\c{s}--Montaldo--Oniciuc conjecture}

\begin{abstract}
In this paper, we show that every biharmonic Wintgen ideal submanifold in a Riemannian manifold of nonpositive constant sectional curvature is minimal.
We also prove that every biharmonic Wintgen ideal submanifold in a Riemannian manifold of positive constant sectional curvature has constant mean curvature on each connected component.
This gives partial affirmative answers to Chen's conjecture, to the generalized Chen's conjecture in hyperbolic spaces, and to the Balmu\c{s}-Montaldo-Oniciuc conjecture in spheres within the class of Wintgen ideal submanifolds.
\end{abstract}

\maketitle

\section{Introduction}
A smooth map $\phi:(M,g)\rightarrow (N,h)$ is called biharmonic if it is a critical point of the bienergy functional
\[
E_2(\phi)=\frac{1}{2}\int_M |\tau(\phi)|^2\,dv_g,
\]
where $\tau(\phi)$ is the tension field of $\phi$ (cf. \cite{ES1964}, \cite{EL1983}).
If an isometric immersion $\phi:(M,g)\rightarrow (N,h)$ is biharmonic, then $M$ is called a biharmonic submanifold of $N$.
Every minimal submanifold is biharmonic.
The basic problem is to determine when the converse holds, or when biharmonicity forces constant mean curvature in Riemannian manifolds of positive curvature.

A classical inequality in the theory of submanifolds is
\[
K\leq |{\bf H}|^2
\]
for surfaces in Euclidean three-space $\mathbb{E}^3$, where $K$ is the Gaussian curvature and ${\bf H}$ is the mean curvature vector.
Equality holds exactly at umbilical points.

In higher codimension, the Wintgen inequality refines this classical inequality by incorporating an additional term involving the normal curvature.
It was proved by Wintgen \cite{Wintgen1979} for surfaces in four-dimensional Euclidean space and by Guadalupe and Rodriguez \cite{GR1983} for surfaces in Riemannian manifolds of constant sectional curvature.
For related background on Wintgen-type inequalities, see Chen \cite{Chen2021}.

Surfaces attaining equality in the Wintgen inequality are called Wintgen ideal surfaces.
The equality case is geometrically meaningful rather than merely algebraic.
In codimension two, it is equivalent to the circularity of the curvature ellipse.
Surfaces with a circular curvature ellipse are often called superconformal surfaces.
Thus Wintgen ideal surface theory is closely related to conformal surface theory, superminimal surfaces, and minimal surface theory.
Dajczer and Tojeiro described superconformal surfaces in $\mathbb{R}^4$ in terms of conjugate minimal surfaces \cite{DT2009}.
In the spherical case, Bryant studied superminimal surfaces in $\mathbb{S}^4$ \cite{Bryant1982}.
Although superminimality is usually defined in twistor-theoretic terms, Montiel and Urbano showed that it is equivalent to a condition on the second fundamental form \cite{MU1997}.
In codimension two, this condition says precisely that the surface is minimal and its curvature ellipse is a circle.
See, for example, \cite{Chen2010} and \cite{LMWX2016}.
Thus, in the terminology of Wintgen geometry, superminimal surfaces in $\mathbb{S}^4$ are minimal Wintgen ideal surfaces.

This relation provides one motivation for the present paper.
Bryant's work \cite{Bryant1982} provides a classical model for the minimal case of the geometry considered here.
The purpose of the present paper is to study the biharmonic analogue of this picture and to treat the corresponding rigidity problem for Wintgen ideal submanifolds.
For submanifolds of dimension at least three, the Wintgen ideal condition is understood as equality in the DDVV inequality.
This equality case gives the Choi--Lu normal form for the shape operators \cite{CL2008}.
This normal form reflects the special geometry of Wintgen ideal submanifolds and makes them a natural higher-codimensional class in which to study biharmonic rigidity.

The second motivation comes from Chen-type conjectures for biharmonic submanifolds.
Chen's conjecture predicts that every biharmonic submanifold in a Euclidean space is minimal.
The generalized Chen's conjecture asserts that every biharmonic submanifold in a Riemannian manifold of nonpositive sectional curvature is minimal.
This conjecture is false in general, as shown by examples due to Ou and Tang \cite{OT2012}, but the corresponding problem for Riemannian manifolds of constant nonpositive sectional curvature, particularly hyperbolic spaces, remains open in full generality.
In positive curvature, the Balmu\c{s}-Montaldo-Oniciuc conjecture predicts that proper biharmonic submanifolds in spheres have constant mean curvature (cf. \cite{BMO2008}, \cite{Chen2014}).
Thus our problem can also be viewed as a Chen-type rigidity problem under the additional Wintgen ideal condition.

A large part of the known progress on these conjectures concerns hypersurfaces.
For Chen's conjecture in Euclidean spaces, we refer to \cite{CI1991}, \cite{Jiang1987}, \cite{HV1995}, \cite{Defever1998}, \cite{Dimitric1992}, \cite{DA2020}, \cite{FHZ2021}, and \cite{FHZ2023}.
For the nonpositive-curvature setting and the hyperbolic case, see \cite{Oniciuc2002}, \cite{Luo2014}, and \cite{FHZ2023}.
For the Balmu\c{s}-Montaldo-Oniciuc conjecture in spheres, see \cite{CMO2001}, \cite{BMO2008}, \cite{BMO2010}, and \cite{FHZ2023}.
These references indicate that the hypersurface case has been studied extensively.

On the other hand, the conjectures are not only hypersurface problems.
A full understanding requires results in higher codimension.
Several partial affirmative answers are known in higher codimension.
Akutagawa and the author proved that every biharmonic properly immersed submanifold in a Euclidean space is minimal \cite{AM2013}.
In nonpositive curvature, Nakauchi and Urakawa proved minimality under a square-integrability condition on the mean curvature vector \cite{NU2013}, and Luo obtained related results under $L^p$ and volume-growth assumptions \cite{Luo2015}.
The author proved that every properly immersed biharmonic submanifold in a hyperbolic space is minimal \cite{Maeta2014}.
Under the assumption that the normalized mean curvature vector is parallel, Chen's conjecture has also been studied for higher-codimensional submanifolds \cite{MU2013}, \cite{ST2018}, \cite{Chen2019}.
Under the same assumption, Balmu\c{s}, Montaldo, and Oniciuc obtained an affirmative answer to the Balmu\c{s}-Montaldo-Oniciuc conjecture for biharmonic submanifolds in spheres \cite{BMO2013}.
These results show that higher-codimensional submanifolds form a natural but still less developed setting for Chen-type problems.

\begin{theorem}\label{main}
Let $M^m$ be a Wintgen ideal submanifold in an $(m+p)$-dimensional Riemannian manifold $N^{m+p}(\rho)$ of constant sectional curvature $\rho$, where $m\geq2$ and $p\geq1$.
If $M^m$ is biharmonic, then the following assertions hold.
\begin{itemize}
\item[$(1)$] If $\rho\leq 0$, then $M^m$ is minimal.
\item[$(2)$] If $\rho>0$, then $|{\bf H}|$ is constant on each connected component of $M^m$.
\end{itemize}
\end{theorem}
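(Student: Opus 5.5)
The plan is to reduce both assertions to one pointwise statement: \emph{on each connected component, $|{\bf H}|$ is locally constant}. Once that holds, the normal part of the biharmonic equation finishes the argument.

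Write the biharmonic system for an isometric immersion into $N^{m+p}(\rho)$ in its standard split form. The tangential part is
\[
2\,\mathrm{tr}\,A_{\nabla^\perp_{\cdot}{\bf H}}(\cdot)+\frac{m}{2}\,\mathrm{grad}\,|{\bf H}|^2=0 .
\]
The normal part is
\[
\Delta^\perp{\bf H}+\mathrm{tr}\,B(\cdot,A_{\bf H}\cdot)-m\rho\,{\bf H}=0 ,
\]
with $\Delta^\perp=-\mathrm{tr}\,(\nabla^\perp)^2$.

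Pair the normal equation with ${\bf H}$ and use $\frac12\Delta|{\bf H}|^2=\langle\Delta^\perp{\bf H},{\bf H}\rangle-|\nabla^\perp{\bf H}|^2$, where the left-hand $\Delta=-\mathrm{tr}\,\nabla^2$ is the positive Laplacian on functions. This gives
\[
\tfrac12\Delta|{\bf H}|^2=-|\nabla^\perp{\bf H}|^2-|A_{\bf H}|^2+m\rho|{\bf H}|^2 .
\]
Now evaluate $|A_{\bf H}|^2$ using the Choi--Lu normal form \cite{CL2008}. There is an orthonormal tangent frame $\{e_i\}$ and normal frame $\{\xi_r\}$ such that
\begin{itemize}
\item[(i)] $A_{\xi_1}=\lambda_1 I+\mu\,(E_{12}+E_{21})$;
\item[(ii)] $A_{\xi_2}=\lambda_2 I+\mu\,(E_{11}-E_{22})$;
\item[(iii)] $A_{\xi_3}=\lambda_3 I$, and $A_{\xi_r}=0$ for $r\ge4$.
\end{itemize}
Here ${\bf H}=\lambda_1\xi_1+\lambda_2\xi_2+\lambda_3\xi_3$, so $|A_{\bf H}|^2=m|{\bf H}|^4+2\mu^2(\lambda_1^2+\lambda_2^2)\ge m|{\bf H}|^4$.

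If $|{\bf H}|$ is constant, the left-hand side vanishes and we get $m|{\bf H}|^4\le m\rho|{\bf H}|^2$. For $\rho<0$ this forces ${\bf H}=0$. For $\rho=0$ it forces ${\bf H}=0$ as well, since $|A_{\bf H}|^2\ge m|{\bf H}|^4>0$ whenever ${\bf H}\neq0$. For $\rho>0$ the constancy of $|{\bf H}|$ is already the claim in (2). So everything hinges on constancy of $|{\bf H}|$; note that no completeness is assumed, so a maximum-principle argument is not available.

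To prove that $|{\bf H}|$ is constant, I would argue by contradiction on the open set $U=\{\mathrm{grad}\,|{\bf H}|^2\neq0\}$, assuming it is nonempty.

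\textbf{Codimension one.} Here Wintgen ideal means totally umbilical, so $A=|{\bf H}|I$. The Codazzi equation in a space form gives $(m-1)\,\mathrm{grad}\,|{\bf H}|=0$, hence $|{\bf H}|$ is constant since $m\ge2$. This also covers any region where $\mu\equiv0$, i.e.\ the umbilical case.

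\textbf{General case.} On the open set where $\mu\neq0$ I would substitute the normal form into the tangential equation. Writing $\nabla^\perp_{e_i}{\bf H}=\sum_r (e_i\lambda_r)\xi_r+\sum_r\lambda_r\nabla^\perp_{e_i}\xi_r$, the tangential equation becomes
\[
2\sum_i A_{\nabla^\perp_{e_i}{\bf H}}e_i+\frac m2\,\mathrm{grad}\,|{\bf H}|^2=0 .
\]
Because the traceless parts of the shape operators live only on $\mathrm{span}\{e_1,e_2\}$, the components of this equation along $e_j$ for $j\ge3$ read
\[
2\sum_r\lambda_r\,e_j\lambda_r+\frac m2\, e_j|{\bf H}|^2=(m+2)\cdot\tfrac12\, e_j|{\bf H}|^2 .
\]
Here I use that $\langle\nabla^\perp\xi_r,\xi_s\rangle$ is skew, so the connection terms drop after contraction with the umbilical parts $\lambda_rI$. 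These components therefore force $e_j|{\bf H}|^2=0$ for $j\ge3$. The $e_1,e_2$ components involve $\mu$ together with the normal connection forms $\omega_{rs}(e_i)$. I would combine them with the Codazzi equations for the Choi--Lu frame; these are the relations computed by Xie, Li, Ma and Wang, which tie $e_1\lambda_r$, $e_2\lambda_r$, $e_i\mu$ and the forms $\omega_{12},\omega_{13},\omega_{23}$ together. Together with the $e_j$ components, the target is to show that $e_1|{\bf H}|^2=e_2|{\bf H}|^2=0$ as well, contradicting $U\neq\emptyset$.

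\textbf{Surfaces, $m=2$.} Here the normal form is automatic. The tangential equation reduces to $2\,\mathrm{tr}\,A_{\nabla^\perp{\bf H}}+\mathrm{grad}\,|{\bf H}|^2=0$, and I would exploit the circular curvature ellipse through a complex (Hopf-type) coordinate in which the traceless part of $A$ is conformal.

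The main obstacle is this last algebra/Codazzi step. One must show that the linear system in the derivatives of $\lambda_1,\lambda_2,\lambda_3$ and the connection forms admits no solution with $\mathrm{grad}\,|{\bf H}|\neq0$; the $m=2$ case may need its own separate computation.
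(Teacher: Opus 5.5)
\textbf{Verdict: there is a genuine gap.} The central step of your proposal is not carried out, and the mechanism you propose for it cannot work.

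What you do prove is correct and matches how the paper finishes each case. Pairing the normal equation with ${\bf H}$ and using $|A_{\bf H}|^2\ge m|{\bf H}|^4$ is what produces \eqref{H0zeroeq} and \eqref{nondegeq}. The umbilical case is the paper's Codazzi argument. Your $e_u$-components of the tangential equation are \eqref{tanu}.

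But the content of the theorem is the step you leave as a target: $e_1|{\bf H}|^2=e_2|{\bf H}|^2=0$ on the non-umbilical set. Nothing in the proposal proves it, and the Hopf-coordinate remark for $m=2$ is not an argument. Your suggested mechanism is that the $e_1,e_2$-components of the tangential equation, together with Codazzi, form a first-order system with no solution having $\nabla|{\bf H}|\ne0$. As a pointwise statement this is false.

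It does work when ${\bf H}\perp\operatorname{span}\{\xi_1,\xi_2\}$, which is the paper's case ${\bf H}_0=0$. It fails for ${\bf H}=a\xi_1$, $A_{\xi_1}=aI+\mu S_1$, $A_{\xi_2}=\mu S_2$. There Codazzi in the $e_1e_2$-plane gives $q_1=a_2/a$ and $q_2=-a_1/a$, and the tangential equation becomes \eqref{tan3}. Where $(m+2)^2a^2=16\mu^2$, i.e.\ $\mu=\varepsilon\frac{m+2}{4}a$, this system is solved by $a_1=-aq$, $a_2=\varepsilon aq$ with $q$ free. None of the other Codazzi equations (those involving $e_u$, $u\ge3$, or the $\xi_\alpha$-components) constrains $q$.

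This degenerate branch therefore needs second-order input. The paper uses the Gauss equation for the $e_1e_2$-plane and the Ricci equation for $(\xi_1,\xi_2)$. For surfaces it also uses the $\xi_1$-component of the normal biharmonic equation, reaching \eqref{surfrhoeq2} for $\rho\le0$ \emph{without} first knowing that $|{\bf H}|$ is constant. So your order, ``constancy from the tangential part first, normal equation afterwards'', cannot be kept on this branch. You also need to organise the position of ${\bf H}$ relative to $\operatorname{span}\{\xi_1,\xi_2\}$; the paper does this through Lemma~\ref{Chenred}.

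Check both of these paper ingredients before relying on them.
\begin{itemize}
\item For $p\ge3$, the Chen property behind Lemma~\ref{Chenred} appears to fail. Take an umbilic-free superconformal surface in a small sphere $S^4(r)\subset\mathbb{R}^5$ that is not minimal in $S^4(r)$, e.g.\ a M\"obius image of the Veronese surface. It is Wintgen ideal in $\mathbb{R}^5$ with $\lambda_3=1/r$ and ${\bf H}_0\ne0$, so ${\bf H}_0\notin\operatorname{span}\{{\bf H}\}$. Your general ansatz with $\lambda_1,\lambda_2,\lambda_3$ all present is therefore genuinely needed.
\item With $\omega_i^j(X)=\langle\nabla_Xe_i,e_j\rangle$, one has $\langle R(e_1,e_2)e_1,e_2\rangle=d\theta(e_1,e_2)+\sum_u(\omega_1^u\wedge\omega_2^u)(e_1,e_2)$. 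You can check this on $S^3$ with a left-invariant frame.
\item Likewise $\langle R^\perp(e_1,e_2)\xi_1,\xi_2\rangle=dq^\perp(e_1,e_2)+\sum_\alpha(r^\alpha\wedge s^\alpha)(e_1,e_2)$. In both identities the quadratic terms carry the opposite sign to those in the paper.
\item With these signs, \eqref{rhoeq} becomes $\rho=(2k+1)(k-1)a^2-8\sum_u\theta_u^2$. The right-hand side of \eqref{surfrhoeq1} becomes $2\rho$.
\end{itemize}
Hence Gauss and Ricci alone do not dispose of the degenerate branch when $m\ge3$, nor when $m=2$, $p\ge3$, $\rho>0$. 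That is exactly where your argument would have to supply something new.
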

When $m=2$, this is the case of Wintgen ideal surfaces.
The proof below is written uniformly for $m\geq2$.
Only the two places in the proof where the argument for $m\geq3$ uses tangent directions $e_u$ with $u\geq3$ are separated into subcases.

Here and throughout this paper, $N^n(\rho)$ denotes an $n$-dimensional Riemannian manifold of constant sectional curvature $\rho$.
No completeness or connectedness of the ambient manifold is assumed.
Throughout this paper, all manifolds and maps are assumed to be smooth.

\section{Preliminaries}
Let $M^m$ be an immersed submanifold in $N^{m+p}(\rho)$.
We write $B$, $A_\xi$, ${\bf H}$, $\nabla$, and $\nabla^\perp$ for the second fundamental form, the shape operator with respect to a normal vector $\xi$, the mean curvature vector, the Levi-Civita connection of $M^m$, and the normal connection, respectively.

For a normal vector field $\eta$, we use the convention
\[
\Delta^\perp\eta=
\sum_{i=1}^m
\left\{
\nabla^\perp_{e_i}\nabla^\perp_{e_i}\eta-
\nabla^\perp_{\nabla_{e_i}e_i}\eta
\right\},
\]
where $\{e_1,\cdots,e_m\}$ is a local orthonormal tangent frame.
By the biharmonic submanifold equation of Jiang \cite{Jiang1986} (see also \cite{Oniciuc2002, MO2006}), an $m$-dimensional submanifold of $N^{m+p}(\rho)$ is biharmonic if and only if
\begin{equation}\label{normal}
\Delta^\perp {\bf H}
-\sum_{i=1}^mB(A_{\bf H}e_i,e_i)
+m\rho {\bf H}=0
\end{equation}
and
\begin{equation}\label{tangent}
2\sum_{i=1}^mA_{\nabla^\perp_{e_i}{\bf H}}e_i
+\frac{m}{2}\nabla |{\bf H}|^2=0.
\end{equation}

We recall the higher-dimensional Wintgen ideal condition.
Let $R$ be the normalized scalar curvature of $M^m$, that is
\[
R=
\frac{2}{m(m-1)}
\sum_{1\leq i<j\leq m}
\langle R^M(e_i,e_j)e_j,e_i\rangle,
\]
where $R^M$ is the curvature tensor of $M^m$.
Let $\nu^\perp$ be the normalized normal scalar curvature.
For a local orthonormal normal frame $\{\xi_1,\cdots,\xi_p\}$, it is defined by
\[
\nu^\perp=
\frac{2}{m(m-1)}
\left(
\sum_{1\leq i<j\leq m}
\sum_{1\leq \alpha<\beta\leq p}
\left\langle
R^\perp(e_i,e_j)\xi_\alpha,\xi_\beta
\right\rangle^2
\right)^{1/2},
\]
where $R^\perp$ is the curvature tensor of the normal connection.
These quantities are independent of the chosen orthonormal frames.
The DDVV inequality in a Riemannian manifold of constant sectional curvature is
\begin{equation}\label{ddvv}
R+\nu^\perp\leq |{\bf H}|^2+\rho.
\end{equation}
A submanifold is called Wintgen ideal if equality holds identically in \eqref{ddvv}.
The equality case of the DDVV inequality is described by the Choi--Lu normal form \cite{CL2008}.
If $p=1$, equality in \eqref{ddvv} is equivalent to total umbilicity.

If $p\geq2$, then at each point there are orthonormal bases of the tangent and normal spaces such that
\begin{equation}\label{CL1}
A_{\xi_1}=\lambda_1I_m+\mu S_1,
\qquad
A_{\xi_2}=\lambda_2I_m+\mu S_2,
\qquad
A_{\xi_\alpha}=\lambda_\alpha I_m
\quad (\alpha\geq3),
\end{equation}
where
\[
S_1e_1=e_2,
\qquad
S_1e_2=e_1,
\qquad
S_1e_u=0
\quad (u\geq3),
\]
and
\[
S_2e_1=e_1,
\qquad
S_2e_2=-e_2,
\qquad
S_2e_u=0
\quad (u\geq3).
\]
The Choi--Lu normal form is a pointwise statement.
In the differential arguments below, whenever covariant derivatives of the coefficients in \eqref{CL1} are used, we work locally as follows.
At any point of the relevant set at which $\mu\neq0$, we restrict to a sufficiently small connected open neighbourhood on which $\mu$ does not vanish.
On this neighbourhood, we choose smooth local tangent and normal orthonormal frames for which \eqref{CL1} holds throughout.
Such a smooth adapted frame is called a Choi--Lu frame.
All differential computations involving the coefficients $\lambda_\alpha$ and $\mu$ are understood in such a local Choi--Lu frame.
For more details, see \cite{CL2008}.
When $m=2$, the corresponding adapted frame is the usual smooth frame for Wintgen ideal surfaces on a non-umbilical open set.
It is obtained from the circularity of the curvature ellipse after restricting the neighbourhood further if necessary.
For this surface normal form, see also \cite{GR1983}, \cite{DT2009}, and \cite{Chen2010}.
Consequently, every contradiction or local constancy statement obtained below is valid locally near each point under consideration.
We put
\begin{equation}\label{HH0}
{\bf H}=\sum_{\alpha=1}^p\lambda_\alpha\xi_\alpha,
\qquad
{\bf H}_0=\lambda_1\xi_1+\lambda_2\xi_2.
\end{equation}
A sum over $\alpha=3,\cdots,p$ is understood to be zero when $p=2$.
The corresponding expression for the second fundamental form is
\begin{equation}\label{CLB}
\begin{aligned}
B(e_1,e_1)&={\bf H}+\mu\xi_2,\qquad
B(e_1,e_2)=\mu\xi_1,\qquad
B(e_2,e_2)={\bf H}-\mu\xi_2,\\
B(e_u,e_u)&={\bf H}\quad (u\geq3),\qquad
B(e_i,e_j)=0\quad (i\neq j,\ \{i,j\}\neq\{1,2\}).
\end{aligned}
\end{equation}
Here the last formula is understood together with the symmetry of $B$.

We also recall Chen's allied mean curvature vector.
Let $M^m$ be a submanifold of codimension $p\geq2$, and let $\zeta$ be a normal vector field.
At a point where $\zeta\neq0$, choose a local orthonormal normal frame
\[
\left\{
\eta_1=\frac{\zeta}{|\zeta|},\eta_2,\cdots,\eta_p
\right\}.
\]
The allied vector field of $\zeta$ is defined by
\[
a(\zeta)
=
\frac{|\zeta|}{m}\sum_{\beta=2}^{p}
\operatorname{trace}(A_{\eta_1}A_{\eta_\beta})\eta_\beta.
\]
A submanifold is called a Chen submanifold if the allied mean curvature vector $a({\bf H})$ vanishes identically.
Every Wintgen ideal submanifold in a Riemannian manifold of constant sectional curvature is a Chen submanifold \cite{DPSV2010}.
Consequently, at a point where ${\bf H}\neq0$, the vector
\[
\sum_{i=1}^mB(A_{\bf H}e_i,e_i)
\]
belongs to $\operatorname{span}\{{\bf H}\}$.
In fact, at a point where ${\bf H}\neq0$, choose an orthonormal normal frame
\[
\left\{\eta_1=\frac{{\bf H}}{|{\bf H}|},\eta_2,\cdots,\eta_p\right\}.
\]
Set
\[
V:=\sum_{i=1}^m B(A_{\bf H}e_i,e_i).
\]
For each $\beta=2,\cdots,p$, we have
\[
\langle V,\eta_\beta\rangle
=
\sum_{i=1}^m
\langle B(A_{\bf H}e_i,e_i),\eta_\beta\rangle
=
\sum_{i=1}^m
\langle A_{\eta_\beta}A_{\bf H}e_i,e_i\rangle
=
\operatorname{trace}(A_{\bf H}A_{\eta_\beta}).
\]
Since $A_{\bf H}=|{\bf H}|A_{\eta_1}$, this gives
\[
\langle V,\eta_\beta\rangle
=
|{\bf H}|\operatorname{trace}(A_{\eta_1}A_{\eta_\beta}).
\]
If $M^m$ is a Chen submanifold, then $a({\bf H})=0$, and hence
\[
\operatorname{trace}(A_{\eta_1}A_{\eta_\beta})=0
\qquad
(\beta=2,\cdots,p).
\]
Therefore $V$ has no component in the directions
$\eta_2,\cdots,\eta_p$.
Thus
\[
V\in \operatorname{span}\{\eta_1\}
=
\operatorname{span}\{{\bf H}\}.
\]

The following algebraic consequences will be used repeatedly.

\begin{lemma}\label{standard}
Assume that the Choi--Lu normal form \eqref{CL1} holds.
Then
\begin{equation}\label{BAH}
\sum_{i=1}^mB(A_{\bf H}e_i,e_i)
=
m|{\bf H}|^2{\bf H}+2\mu^2{\bf H}_0.
\end{equation}
Consequently, the normal biharmonic equation becomes
\begin{equation}\label{normal2}
\Delta^\perp {\bf H}
-
m|{\bf H}|^2{\bf H}
-
2\mu^2{\bf H}_0
+
m\rho {\bf H}=0.
\end{equation}
\end{lemma}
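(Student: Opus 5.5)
The identity \eqref{BAH} is purely pointwise and algebraic, so I would prove it by a direct trace computation in the orthonormal bases provided by the Choi--Lu normal form \eqref{CL1}. The starting observation is that, for any orthonormal normal frame $\{\xi_1,\cdots,\xi_p\}$,
\[
\sum_{i=1}^mB(A_{\bf H}e_i,e_i)=\sum_{\beta=1}^p\Big(\sum_{i=1}^m\langle A_{\xi_\beta}A_{\bf H}e_i,e_i\rangle\Big)\xi_\beta=\sum_{\beta=1}^p\operatorname{trace}(A_{\xi_\beta}A_{\bf H})\,\xi_\beta .
\]
This uses the same identity $\langle B(X,Y),\xi\rangle=\langle A_\xi X,Y\rangle$ as in the Chen-submanifold discussion above. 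So the task reduces to computing the $p$ traces $\operatorname{trace}(A_{\xi_\beta}A_{\bf H})$.

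First I would express $A_{\bf H}$ in the normal form. Since $\operatorname{trace}S_1=\operatorname{trace}S_2=0$, taking traces in \eqref{CL1} gives $\frac1m\operatorname{trace}A_{\xi_\alpha}=\lambda_\alpha$. This is consistent with ${\bf H}=\sum_\alpha\lambda_\alpha\xi_\alpha$ in \eqref{HH0}. By linearity of $\xi\mapsto A_\xi$,
\[
A_{\bf H}=\sum_{\alpha}\lambda_\alpha A_{\xi_\alpha}=|{\bf H}|^2I_m+\mu(\lambda_1S_1+\lambda_2S_2).
\]

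Next I would record the elementary facts about $S_1,S_2$. Both are trace-free. Both $S_1^2$ and $S_2^2$ equal the orthogonal projection onto $\operatorname{span}\{e_1,e_2\}$, so each has trace $2$. Finally, $S_1S_2e_1=e_2$, $S_1S_2e_2=-e_1$ and $S_1S_2e_u=0$, so $\operatorname{trace}(S_1S_2)=0$.

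With these facts the traces are immediate:
\begin{itemize}
\item For $\beta\geq3$, $A_{\xi_\beta}=\lambda_\beta I_m$ gives $\operatorname{trace}(A_{\xi_\beta}A_{\bf H})=m\lambda_\beta|{\bf H}|^2$.
\item For $\beta=1$, expanding $(\lambda_1I_m+\mu S_1)\bigl(|{\bf H}|^2I_m+\mu\lambda_1S_1+\mu\lambda_2S_2\bigr)$ and using the trace facts leaves $m\lambda_1|{\bf H}|^2+2\mu^2\lambda_1$.
\item Symmetrically, for $\beta=2$ one obtains $m\lambda_2|{\bf H}|^2+2\mu^2\lambda_2$.
\end{itemize}
Summing against $\xi_\beta$ gives $m|{\bf H}|^2{\bf H}+2\mu^2(\lambda_1\xi_1+\lambda_2\xi_2)$, which is \eqref{BAH} by the definition of ${\bf H}_0$. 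Substituting \eqref{BAH} into \eqref{normal} yields \eqref{normal2} directly.

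There is no real obstacle here. The only points needing care are the vanishing of the mixed term $\operatorname{trace}(S_1S_2)$ and the correct identification of the coefficients $\lambda_\alpha$ with the components of ${\bf H}$. The convention that sums over $\alpha\geq3$ are empty when $p=2$ covers that case. In the case $p=1$, which the normal form \eqref{CL1} does not address, total umbilicity gives $A_{\bf H}=|{\bf H}|^2I_m$. Then \eqref{BAH} holds with $\mu=0$, so the formula remains consistent if one wishes to use it uniformly.
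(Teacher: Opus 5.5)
Your proposal is correct and follows essentially the same route as the paper. Both compute $\operatorname{trace}(A_{\bf H}A_{\xi_\beta})$ componentwise using $A_{\bf H}=|{\bf H}|^2I_m+\mu(\lambda_1S_1+\lambda_2S_2)$ and the trace identities for $S_1,S_2$, and then substitute \eqref{BAH} into \eqref{normal}.
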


\begin{proof}
For each normal direction $\xi_\beta$, we have
\[
\left\langle
\sum_{i=1}^mB(A_{\bf H}e_i,e_i),\xi_\beta
\right\rangle
=
\operatorname{trace}(A_{\bf H}A_{\xi_\beta}).
\]
Using
\[
A_{\bf H}=|{\bf H}|^2I_m+\mu(\lambda_1S_1+\lambda_2S_2),
\]
together with
\[
\operatorname{trace}S_1=\operatorname{trace}S_2=0,
\quad
\operatorname{trace}(S_1S_2)=0,
\quad
\operatorname{trace}(S_1^2)=\operatorname{trace}(S_2^2)=2,
\]
we obtain
\[
\operatorname{trace}(A_{\bf H}A_{\xi_1})
=
m\lambda_1|{\bf H}|^2+2\mu^2\lambda_1,
\]
\[
\operatorname{trace}(A_{\bf H}A_{\xi_2})
=
m\lambda_2|{\bf H}|^2+2\mu^2\lambda_2,
\]
and
\[
\operatorname{trace}(A_{\bf H}A_{\xi_\alpha})
=
m\lambda_\alpha|{\bf H}|^2
\quad (\alpha\geq3).
\]
This proves \eqref{BAH}.
Substituting \eqref{BAH} into \eqref{normal} gives \eqref{normal2}.
\end{proof}

\begin{lemma}\label{Chenred}
Assume that $M^m$ is a Wintgen ideal submanifold in a Riemannian manifold of constant sectional curvature.
Let $x\in M$ be a point at which \eqref{CL1} holds with respect to some adapted orthonormal frame.
Assume that
\[
{\bf H}(x)\neq0,
\qquad
\mu(x)\neq0.
\]
Then
\[
{\bf H}_0(x)\in \operatorname{span}\{{\bf H}(x)\}.
\]
Consequently, at $x$, either
\[
{\bf H}_0(x)=0
\]
or
\[
{\bf H}(x)={\bf H}_0(x).
\]
\end{lemma}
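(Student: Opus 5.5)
The plan is to combine the Chen submanifold property with the algebraic identity \eqref{BAH} of Lemma \ref{standard}. We will work only at the single point $x$, so no smoothness of the frame is needed. By \cite{DPSV2010} every Wintgen ideal submanifold is a Chen submanifold. The computation in the preliminaries then shows that at a point where ${\bf H}\neq0$ the vector $V=\sum_{i=1}^mB(A_{\bf H}e_i,e_i)$ lies in $\operatorname{span}\{{\bf H}\}$. Applying this at $x$ and substituting \eqref{BAH}, which is purely pointwise, we get
\[
m|{\bf H}|^2{\bf H}+2\mu^2{\bf H}_0\in\operatorname{span}\{{\bf H}\}\quad\text{at }x .
\]
Since the first term is already a multiple of ${\bf H}$, it follows that $2\mu(x)^2{\bf H}_0(x)\in\operatorname{span}\{{\bf H}(x)\}$. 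Dividing by $2\mu(x)^2\neq0$ gives ${\bf H}_0(x)\in\operatorname{span}\{{\bf H}(x)\}$, which is the first assertion.

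For the dichotomy, write ${\bf H}_0(x)=c\,{\bf H}(x)$ for some $c\in\mathbb{R}$. By \eqref{HH0},
\[
{\bf H}={\bf H}_0+\sum_{\alpha\geq3}\lambda_\alpha\xi_\alpha ,
\]
so ${\bf H}_0$ is the orthogonal projection of ${\bf H}$ onto $\operatorname{span}\{\xi_1,\xi_2\}$. Taking the inner product of ${\bf H}_0=c{\bf H}$ with ${\bf H}_0$ gives $|{\bf H}_0|^2=c\langle{\bf H},{\bf H}_0\rangle=c|{\bf H}_0|^2$. Hence either ${\bf H}_0(x)=0$, or $c=1$ and therefore ${\bf H}(x)={\bf H}_0(x)$.

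An equivalent route is to compare components: the $\xi_\alpha$-components for $\alpha\geq3$ give $c\lambda_\alpha=0$, so $c=0$ or all $\lambda_\alpha(x)=0$ for $\alpha\geq3$. In the latter case ${\bf H}={\bf H}_0$, and in the former ${\bf H}_0=0$. When $p=2$ we have ${\bf H}={\bf H}_0$ automatically, and the conclusion is immediate.

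The only delicate point is the legitimacy of the inputs. First, we must check that the Chen property, established in a frame adapted to ${\bf H}$, and the formula \eqref{BAH}, established in a Choi--Lu frame at $x$, describe the same frame-independent vector $V$; they do, since $V$ is defined invariantly as a trace. Second, the hypothesis $\mu(x)\neq0$ is exactly what allows the division, and ${\bf H}(x)\neq0$ is needed for the Chen reduction to make sense. With these checks done, there is no real obstacle; the statement is essentially an algebraic corollary of Lemma \ref{standard}.
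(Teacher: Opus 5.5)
Your argument is the paper's own proof, step for step. You quote the Chen property from the preliminaries, substitute \eqref{BAH}, divide by $2\mu(x)^2$, and split ${\bf H}={\bf H}_0+\sum_{\alpha\ge3}\lambda_\alpha\xi_\alpha$ orthogonally. Your identity $|{\bf H}_0|^2=c|{\bf H}_0|^2$ is a tidy version of the paper's component comparison, and all of this algebra is correct. The gap is the one non-algebraic input: that every Wintgen ideal submanifold of a space form is a Chen submanifold. Your ``delicate point'' check, frame-independence of $V$, does not address it. In fact \eqref{BAH} gives, at any point with ${\bf H}\neq0$,
\[
a({\bf H})=\frac{2\mu^2}{m}\Bigl({\bf H}_0-\frac{\langle{\bf H}_0,{\bf H}\rangle}{|{\bf H}|^2}\,{\bf H}\Bigr).
\]
So when $\mu\neq0$, the condition $a({\bf H})=0$ is \emph{equivalent} to ${\bf H}_0\in\operatorname{span}\{{\bf H}\}$. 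The lemma is therefore just a restatement of the Chen property. That property is trivially true for $p=2$, where ${\bf H}={\bf H}_0$, but false for $p\ge3$.

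Here is a counterexample. Let $\Sigma\subset\mathbb{S}^4(1)$ be a surface with circular curvature ellipse and $\mu\neq0$ that is not minimal in $\mathbb{S}^4$. For instance, take the image of the Veronese surface under a non-isometric M\"obius transformation of $\mathbb{S}^4$: circularity of the ellipse is conformally invariant, minimality is not. View $\Sigma$ in $\mathbb{R}^5$, so $\rho=0$ and $p=3$. The normal $\xi_3=-x$ is $\nabla^\perp$-parallel with $A_{\xi_3}=I$. Hence $R$ and $R^\perp$ are unchanged, while $|{\bf H}|^2$ increases by $1$ and $\rho$ drops from $1$ to $0$. Equality in \eqref{ddvv} therefore persists, and \eqref{CL1} holds with $\xi_1,\xi_2$ spanning the normal plane of $\Sigma$ in $\mathbb{S}^4$ and $\lambda_3=1$. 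Then ${\bf H}_0$ is the mean curvature vector of $\Sigma$ in $\mathbb{S}^4$ and ${\bf H}={\bf H}_0+\xi_3$. At every point where ${\bf H}_0\neq0$ we get ${\bf H}_0\notin\operatorname{span}\{{\bf H}\}$, and both alternatives of the lemma fail. Counterexamples for every $m$ come from starting instead with a non-minimal Wintgen ideal $M^m\subset\mathbb{S}^{m+2}$, such as a stereographic image of $\Sigma_0\times\mathbb{R}^{m-2}$ with $\Sigma_0$ a holomorphic curve in $\mathbb{C}^2$.

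So the step ``$V\in\operatorname{span}\{{\bf H}\}$ by the Chen property'' cannot be justified for $p\ge3$. No other argument can rescue the statement without extra hypotheses. This gap is inherited from the paper's proof, which rests on the same citation. In the application to Theorem~\ref{main}, the mixed case ${\bf H}_0\neq0\neq{\bf H}-{\bf H}_0$ would have to be handled separately using the biharmonic equations.
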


\begin{proof}
As mentioned above, since every Wintgen ideal submanifold in a Riemannian manifold of constant sectional curvature is a Chen submanifold, we have
\[
\sum_{i=1}^m B(A_{\bf H}e_i,e_i)
\in
\operatorname{span}\{{\bf H}\}
\]
at the point $x$.
By Lemma~\ref{standard},
\[
\sum_{i=1}^m B(A_{\bf H}e_i,e_i)
=
m|{\bf H}|^2{\bf H}+2\mu^2{\bf H}_0 .
\]
The first term on the right-hand side already belongs to
$\operatorname{span}\{{\bf H}\}$.
Hence
\[
2\mu^2{\bf H}_0
\in
\operatorname{span}\{{\bf H}\}.
\]
Since $\mu(x)\neq0$, it follows that
\[
{\bf H}_0(x)\in \operatorname{span}\{{\bf H}(x)\}.
\]

Now write, at $x$,
\[
{\bf H}
=
{\bf H}_0+
\sum_{\alpha=3}^p \lambda_\alpha \xi_\alpha .
\]
The two terms on the right-hand side are orthogonal. If
${\bf H}_0(x)=0$, then there is nothing more to prove. Suppose instead
that ${\bf H}_0(x)\neq0$. Since
${\bf H}_0(x)\in \operatorname{span}\{{\bf H}(x)\}$, there exists a real
number $c$ such that
\[
{\bf H}_0(x)=c\,{\bf H}(x).
\]
Thus
\[
{\bf H}_0(x)
=
c\,{\bf H}_0(x)
+
c\sum_{\alpha=3}^p \lambda_\alpha(x)\xi_\alpha .
\]
Comparing the components in
$\operatorname{span}\{\xi_1,\xi_2\}$ and in its orthogonal complement,
we obtain
\[
(1-c){\bf H}_0(x)=0,
\qquad
c\sum_{\alpha=3}^p \lambda_\alpha(x)\xi_\alpha=0.
\]
Since ${\bf H}_0(x)\neq0$, we obtain $c=1$. Therefore
\[
\sum_{\alpha=3}^p \lambda_\alpha(x)\xi_\alpha=0,
\]
and hence
\[
{\bf H}(x)={\bf H}_0(x).
\]
This proves the assertion.
\end{proof}

\section{Proof of the main theorem}
We prove Theorem \ref{main}.
When $p=1$, we are in the totally umbilical case, which is covered by the first part of the proof.
Thus, except in the totally umbilical case, we assume $p\geq2$ and use the Choi--Lu normal form.
As explained in the preliminaries, this normal form is used locally on open sets on which the adapted frame can be chosen smoothly \cite{CL2008}.

\begin{proof}[Proof of Theorem \ref{main}]

Let
\[
U=\{x\in M:{\bf H}(x)\neq0\}.
\]

\subsection*{The totally umbilical case}
Suppose first that there exists a nonempty connected open set $W\subset U$ on which the restriction of $M^m$ is totally umbilical.
Equivalently, in the Choi--Lu notation when $p\geq2$, this corresponds to
\[
\mu=0 \quad \text{on } W.
\]
Then
\[
B(X,Y)=\langle X,Y\rangle {\bf H}
\]
on $W$.
The Codazzi equation gives
\[
\langle Y,Z\rangle\nabla_X^\perp {\bf H}
=
\langle X,Z\rangle\nabla_Y^\perp {\bf H}.
\]
Choosing a tangent vector $Y$ with $X\perp Y$ and putting $Z=Y$, we obtain
\[
\nabla^\perp {\bf H}=0.
\]
Therefore $\Delta^\perp {\bf H}=0$.
The normal biharmonic equation gives
\[
-|{\bf H}|^2{\bf H}+\rho {\bf H}=0.
\]
Since $W\subset U$, we obtain
\begin{equation}\label{umbcase}
|{\bf H}|^2=\rho
\end{equation}
on $W$.
Thus this case is impossible when $\rho\leq0$, and it gives constant mean curvature when $\rho>0$.

\subsection*{The case where ${\bf H}_0=0$ on an open set}
Let $W$ be a nonempty connected open subset of $U$.
Assume that
\[
\mu\neq0 \quad \text{on } W
\qquad\text{and}\qquad
{\bf H}_0=0 \quad \text{on } W.
\]
The argument below is local on $W$.
Let $x\in W$ be arbitrary.
After replacing a neighbourhood of $x$ by a sufficiently small connected open neighbourhood, which we still denote by $W$, we may use a smooth adapted Choi--Lu frame on $W$.
This case can occur only when $p\geq3$.
After rotating the normal subbundle spanned by $\xi_3,\cdots,\xi_p$, we may write
\[
{\bf H}=d\xi_3,
\qquad
d>0.
\]
Then
\[
A_{\xi_1}=\mu S_1,
\qquad
A_{\xi_2}=\mu S_2,
\qquad
A_{\xi_3}=dI_m,
\qquad
A_{\xi_\alpha}=0
\quad (\alpha\geq4).
\]
We show that $d$ is constant on $W$.
This is the first point at which the proof has to distinguish between $m\geq3$ and $m=2$.

Suppose first that $m\geq3$.
Write
\[
\nabla^\perp_{e_i}\xi_3
=
-r_i\xi_1-s_i\xi_2+
\sum_{\alpha=4}^pt_i^\alpha\xi_\alpha.
\]
The Codazzi equation for $A_{\xi_3}=dI_m$ gives
\begin{equation}\label{codA3}
d_i e_j-d_j e_i
=
A_{\nabla^\perp_{e_i}\xi_3}e_j
-
A_{\nabla^\perp_{e_j}\xi_3}e_i,
\end{equation}
where $d_i=e_i(d)$.
Taking $i=u\geq3$ and $j=1$ in \eqref{codA3}, we obtain
\[
d_u e_1-d_1e_u
=
(-r_uA_{\xi_1}-s_uA_{\xi_2})e_1
-(-r_1A_{\xi_1}-s_1A_{\xi_2})e_u.
\]
Since
\[
A_{\xi_1}e_1=\mu e_2,
\qquad
A_{\xi_2}e_1=\mu e_1,
\qquad
A_{\xi_1}e_u=A_{\xi_2}e_u=0,
\]
comparison of the $e_u$-, $e_1$- and $e_2$-components gives
\[
d_1=0,
\qquad
d_u=-\mu s_u,
\qquad
r_u=0.
\]
Taking $i=u\geq3$ and $j=2$ in \eqref{codA3}, we similarly obtain
\[
d_2=0,
\qquad
d_u=\mu s_u,
\qquad
r_u=0.
\]
Hence, we obtain $d_u=0$ for all $u\geq3$.
Therefore $\nabla d=0$ on $W$.

Suppose next that $m=2$.
There is no tangent direction $e_u$ with $u\geq3$.
Thus the preceding comparison of the $e_u$-component cannot be used.
Instead, we use the tangential biharmonic equation.
Write the $\xi_1$- and $\xi_2$-components of $\nabla^\perp\xi_3$ as
\[
\nabla^\perp_{e_i}\xi_3
=
-r_i^3\xi_1-s_i^3\xi_2+
\sum_{\alpha=4}^pt_i^\alpha\xi_\alpha.
\]
The Codazzi equation for $A_{\xi_3}=dI_2$ gives
\begin{equation}\label{surfH0zero1}
d_1=\mu(r_2^3+s_1^3),
\qquad
d_2=\mu(r_1^3-s_2^3).
\end{equation}
Since
\[
\nabla^\perp_{e_i}{\bf H}=d_i\xi_3+d\nabla^\perp_{e_i}\xi_3,
\]
we have
\[
\sum_{i=1}^2A_{\nabla^\perp_{e_i}{\bf H}}e_i
=
d(d_1-\mu s_1^3-\mu r_2^3)e_1
+
d(d_2-\mu r_1^3+\mu s_2^3)e_2.
\]
By \eqref{surfH0zero1}, the right-hand side is zero.
The tangential biharmonic equation therefore gives $\nabla |{\bf H}|^2=0$.
Since ${\bf H}=d\xi_3$ and $d>0$, we obtain $\nabla d=0$ on $W$.

Thus, in all dimensions $m\geq2$, the function $|{\bf H}|=d$ is constant on $W$.
Since the initial point was arbitrary, $|{\bf H}|$ is locally constant wherever this case occurs.
Taking the inner product of \eqref{normal2} with ${\bf H}$ and using ${\bf H}_0=0$, we obtain
\[
\langle \Delta^\perp {\bf H},{\bf H}\rangle
-
m|{\bf H}|^4
+
m\rho |{\bf H}|^2=0.
\]
Since $|{\bf H}|$ is constant on $W$,
\[
\langle \Delta^\perp {\bf H},{\bf H}\rangle
=
-|\nabla^\perp {\bf H}|^2.
\]
Therefore
\begin{equation}\label{H0zeroeq}
|\nabla^\perp {\bf H}|^2
+
m|{\bf H}|^2(|{\bf H}|^2-\rho)=0.
\end{equation}
This is impossible when $\rho\leq0$.
When $\rho>0$, the local constancy of $|{\bf H}|$ has already been obtained.

\subsection*{The case where ${\bf H}={\bf H}_0$ on an open set}
Let $W$ be a nonempty connected open subset of $U$.
Assume that
\[
\mu\neq0 \quad \text{on } W
\qquad\text{and}\qquad
{\bf H}={\bf H}_0 \quad \text{on } W.
\]
The argument below is local on $W$.
Let $x\in W$ be arbitrary.
After replacing a neighbourhood of $x$ by a sufficiently small connected open neighbourhood, which we still denote by $W$, we may use a smooth adapted Choi--Lu frame on $W$.
By rotating the normal plane spanned by $\xi_1$ and $\xi_2$ and applying the corresponding rotation in the tangent plane spanned by $e_1$ and $e_2$, we may assume that
\[
{\bf H}=a\xi_1,
\qquad
a>0.
\]
Thus
\begin{equation}\label{lastshape}
A_{\xi_1}=aI_m+\mu S_1,
\qquad
A_{\xi_2}=\mu S_2,
\qquad
A_{\xi_\alpha}=0
\quad (\alpha\geq3).
\end{equation}
Write
\begin{equation}\label{normalconn1}
\nabla^\perp_{e_i}\xi_1
=
q_i\xi_2+
\sum_{\alpha=3}^pr_i^\alpha\xi_\alpha,
\end{equation}
and
\begin{equation}\label{normalconn2}
\nabla^\perp_{e_i}\xi_2
=
-q_i\xi_1+
\sum_{\alpha=3}^ps_i^\alpha\xi_\alpha.
\end{equation}
Put
\[
\theta_i=\langle \nabla_{e_i}e_1,e_2\rangle
\qquad (i=1,\cdots,m).
\]
The Codazzi equations for $B(e_1,e_1)$, $B(e_1,e_2)$, and $B(e_2,e_2)$ in the plane spanned by $e_1$ and $e_2$ give the following four relations.
\begin{equation}\label{plane1}
\mu_2=a_1+\mu q_1+2\mu\theta_1,
\end{equation}
\begin{equation}\label{plane2}
a q_1-\mu_1=\mu q_2+2\mu\theta_2,
\end{equation}
\begin{equation}\label{plane3}
\mu_1=a_2-\mu q_2-2\mu\theta_2,
\end{equation}
and
\begin{equation}\label{plane4}
\mu q_1+2\mu\theta_1=a q_2+\mu_2,
\end{equation}
where $a_i=e_i(a)$ and $\mu_i=e_i(\mu)$.
Indeed, these are, respectively, the $\xi_1$- and $\xi_2$-components of
\[
(\nabla_{e_1}B)(e_2,e_2)=(\nabla_{e_2}B)(e_1,e_2)
\]
and
\[
(\nabla_{e_1}B)(e_1,e_2)=(\nabla_{e_2}B)(e_1,e_1).
\]
Subtracting \eqref{plane4} from \eqref{plane1}, and subtracting \eqref{plane2} from \eqref{plane3}, we obtain
\begin{equation}\label{q12}
q_2=-\frac{a_1}{a},
\qquad
q_1=\frac{a_2}{a}.
\end{equation}

Since ${\bf H}=a\xi_1$, we have
\[
\nabla^\perp_{e_i}{\bf H}
=
a_i\xi_1+aq_i\xi_2+
 a\sum_{\alpha=3}^pr_i^\alpha\xi_\alpha.
\]
Since $A_{\xi_\alpha}=0$ for $\alpha\geq3$, the tangential equation \eqref{tangent} gives
\begin{equation}\label{tan1}
(m+2)aa_1+2\mu a_2+2a\mu q_1=0,
\end{equation}
\begin{equation}\label{tan2}
2\mu a_1+(m+2)aa_2-2a\mu q_2=0
\end{equation}
and
\begin{equation}\label{tanu}
a_u=0
\quad (u=3,\cdots,m),
\end{equation}
where \eqref{tanu} is an empty condition when $m=2$.
Substituting \eqref{q12} into \eqref{tan1} and \eqref{tan2}, we obtain
\begin{equation}\label{tan3}
(m+2)aa_1+4\mu a_2=0,
\qquad
4\mu a_1+(m+2)aa_2=0.
\end{equation}

Let
\[
D=\{x\in W:(m+2)^2a^2(x)\neq16\mu^2(x)\}.
\]
This is an open subset of $W$.
If it is nonempty, then on each nonempty connected component of $D$, equation \eqref{tan3} gives $a_1=a_2=0$.
Together with \eqref{tanu}, this gives $\nabla a=0$ on that component.
Thus $|{\bf H}|=a$ is constant on each connected component of $D$.
Taking the inner product of \eqref{normal2} with ${\bf H}=a\xi_1$ gives
\begin{equation}\label{nondegeq}
|\nabla^\perp {\bf H}|^2
+
(ma^2+2\mu^2-m\rho)a^2=0
\end{equation}
on each connected component of $D$.
This is impossible when $\rho\leq0$.
When $\rho>0$, it gives local constancy of $|{\bf H}|$ on $D$.

It remains to treat the local alternative in which the degenerate relation
\begin{equation}\label{degenerate}
(m+2)^2a^2=16\mu^2
\end{equation}
holds.
More precisely, let $x$ be an arbitrary point of the interior, relative to $W$, of the set on which \eqref{degenerate} holds.
After replacing a neighbourhood of $x$ by a sufficiently small connected open neighbourhood, we obtain a nonempty connected open set $W_0$ on which \eqref{degenerate} holds identically.
Put
\[
k=\frac{m+2}{4}.
\]
For some $\varepsilon=\pm1$, we have
\begin{equation}\label{mueps}
\mu=\varepsilon ka
\end{equation}
on $W_0$.
Since $x$ was arbitrary, the following local calculation applies near every point of the interior of the degenerate set.
In the following calculation, all equations are understood on $W_0$.
Set $q=q_2$. By \eqref{q12} and \eqref{tan3},
\begin{equation}\label{degq}
a_1=-aq,
\qquad
a_2=\varepsilon aq,
\qquad
q_1=\varepsilon q.
\end{equation}
Equations \eqref{plane1} and \eqref{plane3}, together with \eqref{mueps} and \eqref{degq}, give
\begin{equation}\label{theta12}
\theta_1=\frac{\varepsilon q}{2k},
\qquad
\theta_2=\frac{q}{2k}.
\end{equation}

We now split the analysis of this degenerate case according to whether $m\geq3$ or $m=2$.

Suppose first that $m\geq3$. Let $u\geq3$.
We compute the Codazzi equations involving one vector $e_u$.
The $\xi_2$-component of
\[
(\nabla_{e_u}B)(e_1,e_1)=(\nabla_{e_1}B)(e_u,e_1)
\]
gives
\begin{equation}\label{u11}
a q_u+\mu_u=\mu\omega_1^u(e_1),
\end{equation}
where $\omega_i^j(X)=\langle\nabla_Xe_i,e_j\rangle$.
The $\xi_1$-component of
\[
(\nabla_{e_u}B)(e_1,e_2)=(\nabla_{e_1}B)(e_u,e_2)
\]
gives
\begin{equation}\label{u12}
\mu_u=\mu\omega_1^u(e_1).
\end{equation}
Since \eqref{tanu} and \eqref{mueps} imply $\mu_u=0$, equations \eqref{u11} and \eqref{u12} give
\begin{equation}\label{quzero}
q_u=0,
\qquad
\omega_1^u(e_1)=0.
\end{equation}
The $\xi_2$-component of
\[
(\nabla_{e_u}B)(e_1,e_2)=(\nabla_{e_1}B)(e_u,e_2)
\]
then gives
\begin{equation}\label{omega2u1}
\omega_2^u(e_1)=-2\theta_u.
\end{equation}
Similarly, the same argument applied to the pairs involving $e_2$ gives
\begin{equation}\label{omega1u2}
\omega_1^u(e_2)=2\theta_u,
\qquad
\omega_2^u(e_2)=0.
\end{equation}
Consequently,
\begin{equation}\label{bracket}
[e_1,e_2]
=
-\theta_1e_1
-
\theta_2e_2
-
4\sum_{u=3}^m\theta_u e_u.
\end{equation}

For each $\alpha\geq3$, the Codazzi equation for $A_{\xi_\alpha}=0$ gives
\begin{equation}\label{rszero}
r_i^\alpha=s_i^\alpha=0
\qquad
(i=1,\cdots,m).
\end{equation}
Indeed, the shape-operator form of the Codazzi equation gives
\[
0
=
-r_i^\alpha A_{\xi_1}e_j-s_i^\alpha A_{\xi_2}e_j
+r_j^\alpha A_{\xi_1}e_i+s_j^\alpha A_{\xi_2}e_i.
\]
Taking $(i,j)=(u,1)$ yields
\[
r_1^\alpha=0,
\qquad
r_u^\alpha=0,
\qquad
s_u^\alpha=0.
\]
Taking $(i,j)=(u,2)$ yields $r_2^\alpha=0$.
Finally, taking $(i,j)=(1,2)$ gives $s_1^\alpha=s_2^\alpha=0$.
Thus \eqref{rszero} follows.

Let $q^\perp$ be the normal connection form defined by
\[
q^\perp(X)=\langle\nabla_X^\perp\xi_1,\xi_2\rangle.
\]
Let $r^\alpha$ and $s^\alpha$ be the one-forms defined by
\[
r^\alpha(e_i)=r_i^\alpha,
\qquad
s^\alpha(e_i)=s_i^\alpha,
\]
and let $\{\omega^1,\cdots,\omega^m\}$ be the dual coframe of $\{e_1,\cdots,e_m\}$.
By \eqref{degq} and \eqref{quzero},
\[
q^\perp(e_1)=\varepsilon q,
\qquad
q^\perp(e_2)=q,
\qquad
q^\perp(e_u)=0.
\]
Set $\theta(X)=\langle\nabla_Xe_1,e_2\rangle$. Using \eqref{theta12} and \eqref{bracket}, we obtain
\begin{equation}\label{dqtheta}
dq^\perp(e_1,e_2)
=
2k\left\{
d\theta(e_1,e_2)-4\sum_{u=3}^m\theta_u^2
\right\}.
\end{equation}
Indeed, $q^\perp$ and $2k\theta$ have the same values on $e_1$ and $e_2$, but not on $e_u$.
The correction term in \eqref{dqtheta} comes from the $e_u$-component of the bracket \eqref{bracket}.

The Gauss equation for the plane spanned by $e_1$ and $e_2$ gives
\[
d\theta(e_1,e_2)
+
\sum_{u=3}^m\omega_1^u\wedge\omega_u^2(e_1,e_2)
=
2\mu^2-a^2-\rho.
\]
By \eqref{quzero}, \eqref{omega2u1} and \eqref{omega1u2},
\[
\omega_1^u\wedge\omega_u^2(e_1,e_2)
=
-4\theta_u^2.
\]
Therefore
\begin{equation}\label{gaussdeg}
d\theta(e_1,e_2)-4\sum_{u=3}^m\theta_u^2
=
2\mu^2-a^2-\rho.
\end{equation}
Combining \eqref{dqtheta} and \eqref{gaussdeg}, we obtain
\begin{equation}\label{dq1}
dq^\perp(e_1,e_2)
=
2k(2\mu^2-a^2-\rho).
\end{equation}
On the other hand, the Ricci equation for the normal pair $(\xi_1,\xi_2)$ gives
\[
dq^\perp
-
\sum_{\alpha=3}^pr^\alpha\wedge s^\alpha
=
2\mu^2\omega^1\wedge\omega^2.
\]
By \eqref{rszero}, this reduces to
\begin{equation}\label{dq2}
dq^\perp(e_1,e_2)=2\mu^2.
\end{equation}
Combining \eqref{dq1} and \eqref{dq2}, and using $\mu^2=k^2a^2$, we obtain
\begin{equation}\label{rhoeq}
\rho=(2k+1)(k-1)a^2.
\end{equation}
Since $m\geq3$, we have $k>1$.
If $\rho\leq0$, equation \eqref{rhoeq} is impossible.
If $\rho>0$, equation \eqref{rhoeq} gives
\[
a^2=\frac{\rho}{(2k+1)(k-1)}.
\]
Thus $a=|{\bf H}|$ is constant on $W_0$.

Suppose next that $m=2$.
Then $k=1$, and the preceding $m\geq3$ calculation cannot be used.
Indeed, the equations involving $e_u$ do not exist, and the coefficient $(2k+1)(k-1)$ in \eqref{rhoeq} becomes zero.
Thus the normal connection coefficients $r_i^\alpha$ and $s_i^\alpha$ must be kept.
In this case \eqref{degenerate} is $a^2=\mu^2$, and \eqref{mueps} is $\mu=\varepsilon a$.
For each $\alpha\geq3$, the Codazzi equation for $A_{\xi_\alpha}=0$ gives
\begin{equation}\label{surfrs1}
s_1^\alpha=\varepsilon r_1^\alpha-r_2^\alpha,
\qquad
s_2^\alpha=r_1^\alpha-\varepsilon r_2^\alpha.
\end{equation}
Let $q^\perp$ be the normal connection form defined by
\[
q^\perp(X)=\langle\nabla_X^\perp\xi_1,\xi_2\rangle,
\]
and let $r^\alpha$ and $s^\alpha$ be the one-forms defined by
\[
r^\alpha(e_i)=r_i^\alpha,
\qquad
s^\alpha(e_i)=s_i^\alpha.
\]
Set 
$\theta(X)=\langle\nabla_Xe_1,e_2\rangle.$
By \eqref{degq} and \eqref{theta12} with $k=1$, we have
\[
q^\perp=2\theta.
\]
Since there is no tangent direction $e_u$ for $u\geq3$, the Gauss equation for the plane spanned by $e_1$ and $e_2$ is simply
\[
d\theta(e_1,e_2)=2\mu^2-a^2-\rho.
\]
Using $\mu^2=a^2$, we obtain
\[
d\theta(e_1,e_2)=a^2-\rho,
\]
and therefore
\begin{equation}\label{surfdq1}
dq^\perp(e_1,e_2)=2a^2-2\rho.
\end{equation}
On the other hand, the Ricci equation for the normal pair $(\xi_1,\xi_2)$ gives
\[
dq^\perp-
\sum_{\alpha=3}^pr^\alpha\wedge s^\alpha
=
2\mu^2\omega^1\wedge\omega^2.
\]
Combining this with \eqref{surfdq1} and $\mu^2=a^2$, we obtain
\[
\sum_{\alpha=3}^pr^\alpha\wedge s^\alpha(e_1,e_2)=-2\rho.
\]
By \eqref{surfrs1},
\[
r^\alpha\wedge s^\alpha(e_1,e_2)
=
(r_1^\alpha-\varepsilon r_2^\alpha)^2.
\]
Hence
\begin{equation}\label{surfrhoeq1}
\sum_{\alpha=3}^p(r_1^\alpha-\varepsilon r_2^\alpha)^2=-2\rho.
\end{equation}
Thus this degenerate case is impossible when $\rho>0$.

It remains to exclude, in the surface case, the same degenerate case when $\rho\leq0$.
Since $\mu^2=a^2$ and ${\bf H}=a\xi_1$, the normal biharmonic equation becomes
\begin{equation}\label{surfnormal3}
\Delta^\perp {\bf H}=(4a^2-2\rho)a\xi_1.
\end{equation}
The $\xi_1$-component of $\Delta^\perp {\bf H}$ is
\[
a\left(
-e_1(q)+\varepsilon e_2(q)-q^2
-
\sum_{\alpha=3}^p\{(r_1^\alpha)^2+(r_2^\alpha)^2\}
\right).
\]
Thus \eqref{surfnormal3} gives
\begin{equation}\label{surfqeq1}
-e_1(q)+\varepsilon e_2(q)-q^2
-
\sum_{\alpha=3}^p\{(r_1^\alpha)^2+(r_2^\alpha)^2\}
=4a^2-2\rho.
\end{equation}
On the other hand, using \eqref{theta12} with $k=1$ and
\[
[e_1,e_2]=-\theta_1e_1-\theta_2e_2,
\]
we obtain
\[
d\theta(e_1,e_2)
=
\frac12e_1(q)-\frac{\varepsilon}{2}e_2(q)+\frac12q^2.
\]
Since $d\theta(e_1,e_2)=a^2-\rho$, we obtain
\[
-e_1(q)+\varepsilon e_2(q)-q^2=-2a^2+2\rho.
\]
Comparing this equality with \eqref{surfqeq1}, we obtain
\begin{equation}\label{surfrhoeq2}
6a^2+
\sum_{\alpha=3}^p\{(r_1^\alpha)^2+(r_2^\alpha)^2\}
=4\rho.
\end{equation}
This is impossible when $\rho\leq0$.

We record the consequences of these two subcases for the present local case where ${\bf H}={\bf H}_0$.
If $\rho\leq0$, then this case cannot occur near any point of $W$.
Indeed, if $D$ is nonempty, then \eqref{nondegeq} gives a contradiction on each connected component of $D$.
Hence $D$ is empty, and therefore \eqref{degenerate} holds on $W$.
If $m\geq3$, the preceding calculation in the degenerate case gives the contradiction \eqref{rhoeq}.
If $m=2$, it gives the contradiction \eqref{surfrhoeq2}.

If $\rho>0$, then $a=|{\bf H}|$ is locally constant on $W$.
Indeed, let
\[
G_W=\{x\in W:\nabla a(x)\neq0\}.
\]
This is an open subset of $W$.
Suppose that $G_W$ is nonempty, and let $x\in G_W$ be arbitrary.
After replacing a neighbourhood of $x$ by a sufficiently small connected open neighbourhood, we obtain a nonempty connected open set $W_1\subset G_W$.
The set $W_1\cap D$ must be empty, because $\nabla a=0$ on every connected component of $D$.
Hence \eqref{degenerate} holds at every point of $W_1$.
Since $W_1$ is open, the relation \eqref{degenerate} holds identically on $W_1$.
If $m\geq3$, the calculation in the degenerate case forces $a$ to be constant on $W_1$.
If $m=2$, the degenerate case is impossible by \eqref{surfrhoeq1}.
Both alternatives contradict $W_1\subset G_W$.
Since $x\in G_W$ was arbitrary, $G_W$ is empty.

\subsection*{Conclusion when $\rho\leq0$}
Assume that $\rho\leq0$.
We prove that $U$ is empty.
Suppose, to the contrary, that $U$ is nonempty.

The totally umbilical case was already shown to be impossible on any nonempty open subset of $U$.
Hence it remains to rule out non-umbilical points of $U$.
Suppose first that there exists a non-umbilical point $x\in U$.
Then, after replacing a neighbourhood of $x$ by a sufficiently small connected open neighbourhood $V\subset U$, we may use a smooth adapted Choi--Lu frame on $V$ with
\[
\mu\neq0 \quad \text{on } V.
\]
Lemma \ref{Chenred} gives
\[
{\bf H}_0\in\operatorname{span}\{{\bf H}\}
\]
at every point of $V$.
Now set
\[
V_1=\{x\in V:{\bf H}_0(x)\neq0\}.
\]
If $V_1$ is nonempty, then on each connected component $W'$ of $V_1$, Lemma \ref{Chenred} gives ${\bf H}={\bf H}_0$ on $W'$.
The case where ${\bf H}={\bf H}_0$ on an open set gives a contradiction.
Therefore $V_1$ is empty, and hence ${\bf H}_0=0$ on $V$.
The case where ${\bf H}_0=0$ on an open set gives a contradiction.
This contradiction shows that no non-umbilical point exists in $U$.

Consequently, every point of $U$ is umbilical.
Since $U$ is open, the restriction of $M^m$ to $U$ is totally umbilical, which contradicts the totally umbilical case.
Therefore $U$ is empty.
Hence ${\bf H}=0$ on $M^m$, and $M^m$ is minimal.

\subsection*{Conclusion when $\rho>0$}
Assume that $\rho>0$.
We prove that $|{\bf H}|$ is constant on each connected component of $M^m$.
It is enough first to prove that $|{\bf H}|$ is locally constant on $U$.
Let
\[
G=\{x\in U:\nabla |{\bf H}|^2(x)\neq0\}.
\]
This is an open subset of $U$.
Suppose that $G$ is nonempty.

If the restriction of $M^m$ to some nonempty open subset of $G$ is totally umbilical, then \eqref{umbcase} gives $|{\bf H}|^2=\rho$ on that open subset, contradicting its inclusion in $G$.
Hence it remains to rule out non-umbilical points of $G$.
Suppose first that there exists a non-umbilical point $x\in G$.
After replacing a neighbourhood of $x$ by a sufficiently small connected open neighbourhood $V\subset G$, we may use a smooth adapted Choi--Lu frame on $V$ with
\[
\mu\neq0 \quad \text{on } V.
\]
Lemma \ref{Chenred} gives
\[
{\bf H}_0\in\operatorname{span}\{{\bf H}\}
\]
at every point of $V$.
Let
\[
V_1=\{x\in V:{\bf H}_0(x)\neq0\}.
\]
If $V_1$ is nonempty, then on each connected component $W'$ of $V_1$, Lemma \ref{Chenred} gives ${\bf H}={\bf H}_0$ on $W'$.
The result proved above for the case where ${\bf H}={\bf H}_0$ on an open set shows that $|{\bf H}|$ is locally constant on $W'$, contradicting $W'\subset G$.
Therefore $V_1$ is empty, and hence ${\bf H}_0=0$ on $V$.
The result proved above for the case where ${\bf H}_0=0$ on an open set shows that $|{\bf H}|$ is constant on $V$, again contradicting $V\subset G$.
This contradiction shows that no non-umbilical point exists in $G$.

Therefore every point of $G$ is umbilical.
Since $G$ is open, the restriction of $M^m$ to $G$ is totally umbilical.
By \eqref{umbcase}, $|{\bf H}|^2=\rho$ on $G$, contradicting the definition of $G$.
Hence $G$ is empty.
Therefore
\[
\nabla |{\bf H}|^2=0
\]
on $U$.

Let $C$ be a connected component of $M^m$.
If $C\cap U$ is empty, then ${\bf H}=0$ on $C$.
Assume that $C\cap U$ is nonempty, and let $\Omega$ be a connected component of $C\cap U$.
Since $\nabla |{\bf H}|^2=0$ on $U$, the function $|{\bf H}|$ is constant on $\Omega$.
Let this constant be $h_0$.
Since $\Omega\subset U$, we have $h_0>0$.
If the relative boundary $\partial_C\Omega$ in $C$ were nonempty, then
\[
\partial_C\Omega\subset C\setminus U.
\]
Thus ${\bf H}=0$ on $\partial_C\Omega$.
Taking a sequence in $\Omega$ converging to a boundary point and using the continuity of $|{\bf H}|$, we obtain $h_0=0$.
This contradicts $h_0>0$.
Hence $\partial_C\Omega$ is empty.
Since $C$ is connected, $\Omega=C$.
Therefore $|{\bf H}|$ is constant on $C$.
This proves Theorem \ref{main}.
\end{proof}

\begin{remark}
When $\rho=0$, Theorem \ref{main} gives the Euclidean conclusion in the class of Wintgen ideal submanifolds of dimension at least two.
When $\rho<0$, it gives the corresponding hyperbolic conclusion.
The conclusion cannot be improved to minimality in the spherical case, since small hyperspheres in spheres give proper biharmonic totally umbilical examples \cite{Jiang1986a}.
\end{remark}

\end{document}